\numberwithin{equation}{section}
\theoremstyle{plain}
\newtheorem{Th}{Theorem}[section]
\newtheorem{Lemma}[Th]{Lemma}
\newtheorem{Prop}[Th]{Proposition}
 \theoremstyle{definition}
\newtheorem{Def}[Th]{Definition}
\newtheorem{Rem}[Th]{Remark}
\newtheorem{?}[Th]{Problem}
\begin{document}

\title{Asymptotic Enumeration of Binary Contingency Tables and Comparison with Independence Heuristic}

\author[Da Wu]{Da Wu}

\address{University of Pennsylvania \\ Department of Mathematics \\ David Rittenhouse Lab \\ 209 South 33rd Street \\ Philadelphia, PA, 10104-6395} 

\email{dawu@math.upenn.edu}

 \subjclass[2020]{Primary: 05A16}

 \keywords{Binary Contingency Tables; Maximum Entropy Principle; Independence Heuristic}

\begin{abstract}
 For parameters $n,\delta,B,C$, we obtained a sharp asymptotic formula for the number of $(n+\lfloor n^\delta\rfloor)^2$-dimensional binary contingency tables with non-uniform margins taking values of $\lfloor BCn\rfloor$ and $\lfloor Cn\rfloor$. Furthermore, we compared our sharp asymptotics with the classical \textit{independence heuristic estimate} and proved that the independence heuristic overestimates by a factor of $e^{\Theta(n^{2\delta})}$. Our comparison is based on the analysis of the \textit{correlation ratio} and an explicit bound for the constant in $\Theta$ is also obtained. 
\end{abstract}
\maketitle

\section{Introduction}
\subsection{Overview}
This paper studies the asymptotic enumeration of \textit{binary contingency tables} and its connection with the classical \textit{independence heuristic} introduced by I. J. Good $70$ years ago \cite{Good1}.
\par
Binary contingency table is the set of $0$-$1$ matrices with fixed row and column sums. Let $\mathbf r=(r_1,\ldots,r_m)$ and $\mathbf c=(c_1,\ldots,c_n)$ be two positive integer vectors with same total sum of entries, i.e. $\sum_{i=1}^m r_i=\sum_{j=1}^n c_j=N$. Let 
\begin{equation*}
	\mathcal M(\mathbf r,\mathbf c)=\left\lbrace X=(X_{ij}): \sum_{k=1}^{n} X_{ik}=r_i,\sum_{k=1}^m X_{kj}=c_j \ \text{for all}\ 1\leq i\leq m, 1\leq j\leq n  \right\rbrace
\end{equation*}
be the set of binary contingency tables with row margin $\mathbf r$ and column margin $\mathbf c$. Since $X_{ij}\in \lbrace 0,1\rbrace$, it is easy to see that $r_i\leq n, c_j\leq m$ for all $i,j$. Binary contingency table has close connections with bipartite graphs with fixed degree sequence, see e.g., \cite{ICM18}, for historical review. It also arises as the structural constants in the ring of symmetric functions and representation theory of general linear groups, see \cite{Mac98}. 
\par
Estimating the cardinality of $\mathcal M(\mathbf r,\mathbf c)$ is a fundamental problem in analytic combinatorics; see for instance, \cite{Good1} and \cite{Bar10}. At the very beginning, we have the following effortless estimate based on the so-called \textit{independence heuristic}. Precisely speaking, fix $\mathbf r=(r_1,\ldots, r_m)$ and $\mathbf c=(c_1,\ldots, c_n)$ with $N=r_1+\ldots+r_m=c_1+\ldots+c_n$, and let $\mathcal M_N$ be the set of $m\times n$ $0$-$1$ matrices with total sum of entries $N$. Let $X$ be the uniform sample from $\mathcal M_N$ and consider the following two events:
\begin{equation*}
	\mathcal R_{\mathbf r}=\lbrace \text{$X$ has row sum $\mathbf r$}\rbrace\qquad\text{and}\qquad \mathcal R_{\mathbf c}=\lbrace \text{$X$ has column sum $\mathbf c$}\rbrace.
\end{equation*}
It follows from the definition that 
\begin{equation*}
	\mathbb P\left(\mathcal R_{\mathbf r}\right)=\frac{|\mathcal R_{\mathbf r}|}{|\mathcal M_N|}\qquad\text{and}\qquad \mathbb P\left(\mathcal R_{\mathbf c}\right)=\frac{|\mathcal R_{\mathbf c}|}{|\mathcal M_N|}.
\end{equation*}
Assume $\mathcal R_{\mathbf r}$ and $\mathcal R_{\mathbf c}$ are independent, then 
\begin{equation*}
	\frac{|\mathcal R_{\mathbf r}|}{|\mathcal M_N|}\cdot\frac{|\mathcal R_{\mathbf c}|}{|\mathcal M_N|} =\mathbb P\left(\mathcal R_{\mathbf r}\right)\mathbb P\left(\mathcal R_{\mathbf c}\right)=\mathbb P(\mathcal R_{\mathbf r}\cap\mathcal R_{\mathbf c})=\frac{|\mathcal M(\mathbf r,\mathbf c)|}{|\mathcal M_N|}.
\end{equation*}
Therefore, 
\begin{align}\label{derivation of IH}
	|\mathcal M(\mathbf r,\mathbf c)|=\frac{|\mathcal R_{\mathbf r}|\cdot|\mathcal R_{\mathbf c}|}{|\mathcal M_N|}=\binom{mn}{N}^{-1}\prod_{i=1}^m \binom{n}{r_i}\prod_{j=1}^n \binom{m}{c_j}.
\end{align}
We call 
\begin{equation}\label{eqn:ihe}
	\mathcal I(\mathbf r,\mathbf c)=\binom{mn}{N}^{-1}\prod_{i=1}^m \binom{n}{r_i}\prod_{j=1}^n \binom{m}{c_j}.
\end{equation}
the \textit{independence heuristic estimate} corresponding to margins $\mathbf r$ and $\mathbf c$. 
\par
In $1969$, O'Neil \cite{Neil69} solved the problem for the case of large sparse binary matrices with $r_i,c_j\leq (\log n)^{\frac{1}{4}-\varepsilon}$ for all $1\leq i,j\leq n$. In $2010$, A. Barvinok \cite{Bar10} used \emph{permanents} and the \emph{van der Waerden bound} for doubly-stochastic matrix to obtain an estimate for generic $\mathbf r$ and $\mathbf c$. This estimate has recently been improved in \cite{Lor23} using the techniques of Lorentzian polynomials. Later on, Barvinok and Hartigan \cite{Bar11} used the \textit{Maximal Entropy Principle} and local central limit theorem to obtain a more precise asymptotic formula under certain regularity conditions. 
\par
Our main focus is to first derive the asymptotics of $|\mathcal M(\mathbf r,\mathbf c)|$ when both $\mathbf r$ and $\mathbf c$ take two different linear values of $n$; see the precise definition in the next subsection. After that, we compared the results with the independence heuristic estimate (\ref{eqn:ihe}) and showed that the independence heuristic estimate leads to a large overestimate by a factor of $e^{\Theta(n^{2\delta})}$.  Our derivation of asymptotics follows closely the spirit of \cite{PL22} and is mainly based on Barvinok's asymptotic formula in \cite{Bar10} and the author's recent work \cite{Wu22} on the limiting distribution of \emph{Random Binary Contingency Tables}. Similar asymptotics for the uniform margin case can also be obtaned with the same techniques and the limiting distribution derived in \cite{Wu23}. 
\subsection{Setup and Statements of Main Results}
For $0< \delta<1$, $0<B\leq\frac{1}{C}$ and $0<C<1$, let  
\begin{equation*}
	\widetilde{\mathbf r}=\widetilde{\mathbf c}:=(\underbrace{\lfloor BCn\rfloor ,\ldots, \lfloor BCn \rfloor}_{\text{$\lfloor n^\delta\rfloor$ entries}},\underbrace{\lfloor Cn\rfloor\ldots, \lfloor Cn\rfloor}_{\text{$n$ entries}})\in \mathbb N^{[n^\delta]+n},
\end{equation*}
and let 
\begin{equation*}
	\mathcal M_{n,\delta}(B,C):=\mathcal M(\widetilde{\mathbf r},\widetilde{\mathbf c}).
\end{equation*}
Namely, $\mathcal M_{n,\delta}(B,C)$ is the set of $(\lfloor n^\delta\rfloor+n)^2$-dimensional binary matrices whose first $\lfloor n^\delta\rfloor$ rows and columns have sum $\lfloor BCn\rfloor$ and remaining $n$ rows and columns have sum $\lfloor Cn\rfloor$. 
\par
The first main result of this paper is the sharp asymptotics of $|\mathcal M_{n,\delta}(B,C)|$.  
\begin{Th}\label{Main Theorem on Asymptotic Enumeration}
	Fix $0<\delta<1$, $n\geq 1$, $0<C<\frac{3}{4}$ and $0<B\leq \frac{1}{C}$,
	and let $f(x):=x\log \frac{1}{x}+(1-x)\log\frac{1}{1-x}$. We have 
	\begin{align*}
		\log |\mathcal M_{n,\delta}(B,C)| &=f(C)n^2+\left[2f(BC)-(BC)\log \left(\frac{1-C}{C}\right)\right]n^{1+\delta}\\
		&+\left[f(z_{11}^*)+z_{11}^*\log\left(\frac{1-C}{C}\cdot \frac{(BC)^2}{(1-BC)^2}\right)-\frac{B^2 C}{2(1-C)} \right]n^{2\delta}\\
		&+O(n^{3\delta-1}+n\log n),
	\end{align*}
	where $z_{11}^*=\frac{B^2(1-C)}{B^2-2B+1/C}$.
\end{Th}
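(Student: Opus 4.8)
The plan is to combine Barvinok's asymptotic enumeration formula with an explicit asymptotic analysis of the associated maximum-entropy (``typical table'') equations, exploiting the block structure forced by the symmetry $\widetilde{\mathbf r}=\widetilde{\mathbf c}$. Write $g(Z)=\sum_{i,j}f(z_{ij})$ for a matrix $Z=(z_{ij})$ with entries in $(0,1)$. The first step is to invoke Barvinok's formula \cite{Bar10}, in the refined form used in \cite{Pak Lyu} and with the local-limit input of \cite{DWu}, to obtain $\log|\mathcal M_{n,\delta}(B,C)|=g(Z^*)+O(n\log n)$, where $Z^*=(z^*_{ij})$ is the unique maximizer of $g$ over all $[0,1]$-matrices with row margin $\widetilde{\mathbf r}$ and column margin $\widetilde{\mathbf c}$; the hypotheses $0<C<\tfrac34$ and the stated upper bound on $B$ are precisely what will guarantee that $Z^*$ lies in the regime where this formula is valid (in particular that $z_{11}^*<\tfrac34$). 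Next, since $\widetilde{\mathbf r}=\widetilde{\mathbf c}$ is a block of $\lfloor n^\delta\rfloor$ equal entries followed by a block of $n$ equal entries, the entropy optimization is invariant under permuting rows (resp.\ columns) within each group and under transposition; by strict concavity of $f$ the maximizer is unique, hence invariant under these symmetries, i.e., block-constant, say with values $z_{11}$, $z_{12}=z_{21}$, $z_{22}$ on the four blocks, so that $g(Z^*)=\lfloor n^\delta\rfloor^2 f(z_{11})+2\lfloor n^\delta\rfloor n\,f(z_{12})+n^2 f(z_{22})$.

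The second step is to determine $z_{11},z_{12},z_{22}$ to the required precision. The Lagrange conditions for the entropy optimum give $\tfrac{z^*_{ij}}{1-z^*_{ij}}=u_iv_j$ for positive reals $u_i,v_j$, so that the block values satisfy the ``odds-ratio'' identity $\tfrac{z_{11}}{1-z_{11}}\cdot\tfrac{z_{22}}{1-z_{22}}=\bigl(\tfrac{z_{12}}{1-z_{12}}\bigr)^2$; together with the margin equations $\lfloor n^\delta\rfloor z_{11}+n z_{12}=\lfloor BCn\rfloor$ and $\lfloor n^\delta\rfloor z_{12}+n z_{22}=\lfloor Cn\rfloor$ this is a closed system, to be solved asymptotically by bootstrapping. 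Since $0\le z_{11},z_{12}\le1$, the margin equations give $z_{12}=BC+O(n^{\delta-1})$ and $z_{22}=C+O(n^{\delta-1})$; substituting into the odds-ratio identity and solving for $z_{11}$ (inverting $z\mapsto z/(1-z)$ near the relevant interior value) yields $z_{11}=z_{11}^*+O(n^{\delta-1})$ with $\tfrac{z_{11}^*}{1-z_{11}^*}=\tfrac{(BC)^2(1-C)}{(1-BC)^2 C}$, which, on clearing denominators and using $1-2BC+B^2C=C(B^2-2B+1/C)$, becomes $z_{11}^*=\tfrac{B^2(1-C)}{B^2-2B+1/C}$. Feeding $z_{11}=z_{11}^*+O(n^{\delta-1})$ back into the margin equations then yields the sharper expansions $z_{12}=BC-z_{11}^*n^{\delta-1}+O(n^{2\delta-2}+n^{-1})$ and $z_{22}=C-BCn^{\delta-1}+z_{11}^*n^{2\delta-2}+O(n^{3\delta-3}+n^{-1})$.

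The final step is to substitute into $g(Z^*)$ and Taylor-expand $f$ about $z_{11}^*$, $BC$ and $C$, using $f'(x)=\log\tfrac{1-x}{x}$ and $f''(x)=-\tfrac1{x(1-x)}$, and collect by powers of $n$. The $n^2$ coefficient is $f(C)$; the $n^{1+\delta}$ coefficient combines $2f(BC)$ from the $12$, $21$ blocks with the first-order term $-BC\log\tfrac{1-C}{C}$ coming from $n^2 f(z_{22})$; and the $n^{2\delta}$ coefficient combines $f(z_{11}^*)$ from the $11$ block, $-2z_{11}^*f'(BC)=2z_{11}^*\log\tfrac{BC}{1-BC}$ from the $12$, $21$ blocks, and $z_{11}^*\log\tfrac{1-C}{C}-\tfrac{B^2C}{2(1-C)}$ from the first- and second-order terms of $n^2 f(z_{22})$; their sum is $f(z_{11}^*)+z_{11}^*\log\bigl(\tfrac{1-C}{C}\cdot\tfrac{(BC)^2}{(1-BC)^2}\bigr)-\tfrac{B^2C}{2(1-C)}$, as claimed. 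Everything of higher order---third-order Taylor remainders, of size $n^2\cdot O(n^{3(\delta-1)})=O(n^{3\delta-1})$; the discrepancies from the floors and from $\lfloor n^\delta\rfloor^2=n^{2\delta}+O(n^\delta)$, of size $O(n)$; and the $O(n\log n)$ enumeration error---is absorbed into $O(n^{3\delta-1}+n\log n)$. I expect the main obstacle to be the second step: one must control the typical-table entries with \emph{relative} accuracy beyond $n^{\delta-1}$, and $z_{22}$ to one further order, because these errors get amplified by the $n^2$ entries of the large block, and one must also verify that in the stated ranges of $B$ and $C$ the nonlinear system genuinely has a unique interior solution admitting exactly this expansion.
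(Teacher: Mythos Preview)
Your proposal is correct and follows essentially the same architecture as the paper: Barvinok's two-sided bound $g(Z)-O(n\log n)\le \log|\mathcal M_{n,\delta}(B,C)|\le g(Z)$ reduces the problem to computing $g(Z)$, and then a Taylor expansion of $f$ about $C$, $BC$, and $z_{11}^*$ (using the margin equations to control $z_{22}-C$ and $z_{12}-BC$ to the requisite orders) produces the stated coefficients.

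The one genuine difference is in how the typical-table entries are obtained. The paper simply imports the estimates $|z_{11}-z_{11}^*|,\,|z_{1,n+1}-BC|,\,|z_{n+1,n+1}-C|=O(n^{\delta-1})$ from Lemma~2.4 of \cite{DWu}, and this is where the hypotheses $0<C<\tfrac34$ and $B<1/(\sqrt{C/3-C^2/3}+C)$ enter. You instead derive these entries from scratch via the Lagrange (odds-ratio) identity $\tfrac{z_{11}}{1-z_{11}}\cdot\tfrac{z_{22}}{1-z_{22}}=\bigl(\tfrac{z_{12}}{1-z_{12}}\bigr)^2$ together with the margin equations, bootstrapping to the needed precision. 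This is more self-contained and makes transparent why $z_{11}^*$ takes the stated form; on the other hand, your remark that the constraints on $B,C$ are ``precisely what guarantee $z_{11}^*<\tfrac34$'' is not quite the right rationale---in the paper those constraints are inherited from the cited lemma in \cite{DWu}, and Barvinok's bound (Theorem~\ref{Barvinok's theorem} here) itself needs no such restriction. Otherwise the two arguments coincide line by line.
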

This theorem will be proved in the Section \ref{sec:proof of main}. For now, we remark that the proof is based on the \textit{Maximum Entropy Principle} and the function $f(x)$ is Shannon-Boltzmann entropy of Bernoulli random variable with mean $x$; the coefficients in front of $n^2,n^{1+\delta},n^{2\delta}$ all come from the Bernoulli entropy. 
\par
Next, we study the relationship between $|\mathcal M_{n,\delta}(B,C)|$ and its corresponding independence heuristic. Recall that for margins $\mathbf r=(r_1,\ldots, r_m)$ and $\mathbf c=(c_1,\ldots,c_n)$, the \textit{independence heuristic estimate} is the following quantity, 
\begin{equation*}
	\mathcal I(\mathbf r,\mathbf c)=\binom{mn}{N}^{-1}\prod_{i=1}^m \binom{n}{r_i}\prod_{j=1}^n \binom{m}{c_j},
\end{equation*}    
where $N=\sum_{i=1}^m r_i=\sum_{j=1}^n c_j$ is the total sum of entries. We denote 
\begin{equation*}
	\mathcal I_{n,\delta}(B,C):=\mathcal I(\widetilde{\mathbf r}, \widetilde{\mathbf c}).
\end{equation*}
To study the relation between $\mathcal I_{n,\delta}(B,C)$ and $|\mathcal M_{n,\delta}(B,C)|$, we consider their \textit{Correlation Ratio} $\rho_{n,\delta}(B,C)$. It is defined as   
\begin{equation*}
	\rho_{n,\delta}(B,C):=\frac{|\mathcal M_{n,\delta}(B,C)|}{\mathcal I_{n,\delta}(B,C)}.
\end{equation*}
Our next main result is on the asymptotic behaviour of $\rho_{n,\delta}(B,C)$. 
\begin{Th}\label{Main Theorem on Correlation Ratio}
	Fix $0<\delta<1$, $n\geq 1$, $0<C<\frac{3}{4}$ and $0<B\leq\frac{1}{C}$, we have 
\begin{equation*}
	\lim_{n\to \infty}\frac{1}{n^2}\log \rho_{n,\delta}(B,C)=0 \qquad{and}\qquad \lim_{n\to \infty}\frac{1}{n^{1+\delta}}\log \rho_{n,\delta}(B,C)=0.
\end{equation*}
Furthermore, let 
\begin{equation*}
	\Delta_{B,C}:=\lim_{n\to \infty}\frac{1}{n^{2\delta}}\log \rho_{n,\delta}(B,C),
\end{equation*}
and we have the following explicit formula for $\Delta_{B,C}$: 
\begin{equation*}
	\Delta_{B,C}=1-\frac{B^2C-2BC+1}{1-C}-\log\left(\frac{1-C}{B^2C-2BC+1} \right).
\end{equation*}
Moreover,
\begin{equation*}
    0\geq \Delta_{B,C}>
    \begin{cases}
        -\frac{1}{C}+\log\frac{1}{C}+1 &\quad\text{when $0<C\leq \frac{1}{2}$},\\
         -\frac{1}{1-C}+\log\frac{1}{1-C}+1 &\quad\text{when $\frac{1}{2}<C<\frac{3}{4}$},
    \end{cases}
\end{equation*}
with $\Delta_{B,C}=0$ if and only if $B=1$. 
\end{Th}
The behaviour of $\Delta_{B,C}$ tells us that the independence heuristic overestimates the number of tables in $\mathcal M_{n,\delta}(B,C)$. This matches Barvinok's arguments on \textit{cloned margins}, see \cite{Bar10} for details. Probabilistically speaking, the events
\begin{equation*}
	\mathcal R_{n,\delta}(B,C) =\lbrace \text{$0$-$1$ matrices has row sums $\widetilde{\mathbf r}$}\rbrace
\end{equation*}
and 
\begin{equation*}
	C_{n,\delta}(B,C) =\lbrace \text{$0$-$1$ matrices has column sums $\widetilde{\mathbf c}$}\rbrace
\end{equation*}
are asymptotically \textit{negatively correlated} instead of being asymptotically independent. $\Delta_{B,C}$ quantifies how far they are away from being independent. In fact, when $B=1$, i.e. all row sums and columns sums are equal, the independence heuristic provides the best estimate. As $B$ moves away from $1$, meaning that the margins become less and less uniform, the independence heuristic overestimates by a factor of $e^{\Theta(n^{2\delta})}$.
\par
It is really interesting for the readers to compare our results with the recent work of \cite{PL22} on non-negative integer case. When the contingency tables are non-negative integer valued, the independence heuristic leads to a large undercounting, which is opposite to our binary case. The reason behind this phenomena remain mysterious. It would be nice if we can obtain the intermediate results on the contingency tables whose entries take values from $\lbrace 0,1,\ldots, k\rbrace$ for finite $k$. 
\begin{figure}[hbt!]
\centering
	\begin{tikzpicture}
\begin{axis}[domain=0:3.5, samples=100,
    restrict y to domain=-4:4,xlabel=$B$,ylabel=$\Delta_{B,C}$ for different values of $C$, legend pos=outer north east]
\addplot [color=red, dotted] [domain=0:2]   {-x*x+2*x-1+ln(x*x-2*x+2)};
\addplot [color=blue, densely dotted] [domain=0:4]   {1-(1/3)*(x*x-2*x+4)-ln(3)+ln(x*x-2*x+4)};
\addplot [color=black, loosely dotted] [domain=0:1.6]   {1-(1/3)*(5*x*x-10*x+8)-ln(3)+ln(5*x*x-10*x+8)};
\addplot [color=magenta] [domain=0:4]   {1-(1/7)*(x*x-2*x+8)-ln(7)+ln(x*x-2*x+8)};
\legend{$C=\frac{1}{2}$, $C=\frac{1}{4}$, $C=\frac{5}{8}$, $C=\frac{1}{8}$}
\end{axis}
\end{tikzpicture}
\caption{Plot of $\Delta_{B,C}=\lim_{n\to \infty}\frac{1}{n^{2\delta}}\log\rho_{n,\delta}(B,C)$ as a function of $B\in [0,4]$ for $4$ different fixed values of $C$. For each $0<C<\frac{3}{4}$, $B$ is allowed to take value from $0$ to $\frac{1}{C}$.} 
\end{figure} 
\section{Proof of Theorem \ref{Main Theorem on Asymptotic Enumeration}}\label{sec:proof of main}
Let $\mathbf r=(r_1,\ldots,r_m)\in \mathbb N^m$ and $\mathbf c=(c_1,\ldots,c_n)\in \mathbb N^n$ be two positive integer vectors with total sum of entries. The \textit{binary transportation polytope} $\mathcal P(\mathbf r,\mathbf c)$ is defined as  
\begin{equation*}
	\mathcal P(\mathbf r,\mathbf c):=\left\lbrace X=(X_{ij})\in [0,1]^{mn}: \sum_{k=1}^n X_{ik}=r_i, \sum_{k=1}^m X_{kj}=c_j,\forall i,j \right\rbrace.
\end{equation*}
Barvinok introduced the following notion of \textit{Typical Table} in \cite{Bar10}. 
\begin{Def}[Typical Table]
	Let $\mathbf r=(r_1,\ldots,r_m)$, $\mathbf c=(c_1,\ldots,c_n)$ be two positive integer vectors with total sum of entries. For each $X=(X_{ij})\in \mathcal P(\mathbf r,\mathbf c)$, let 
	\begin{equation*}
		g(X)=\sum_{i,j}f(X_{ij}),
	\end{equation*} 
	where $f(x)=x\log\frac{1}{x}+(1-x)\log\frac{1}{1-x}$ for $x\in (0,1)$. The \textit{Typical Table} $Z=(z_{ij})$ is defined as the unique maximizer of $g$ on $\mathcal P(\mathbf r,\mathbf c)$.  
\end{Def}
\begin{Rem}
	The function $g$ is strictly concave so it attains a unique maximum in the interior of the binary transportation polytope. Hence typical table is well-defined. 
\end{Rem}
\begin{Rem}
	The function $f(x)=x\log\frac{1}{x}+(1-x)\log\frac{1}{1-x}$ is the Shannon-Boltzmann entropy of Bernoulli random variable with mean $x$. Therefore, $g(X)$ can be viewed as the Bernoulli entropy of $X$ and $Z$ is the \textit{maximal entropy matrix} on the polytope $\mathcal P(\mathbf r,\mathbf c)$. 
\end{Rem}
\begin{Rem}
	By symmetry, two entries in typical table are equal if they have the same margin conditions, i.e $z_{ij}=z_{i'j'}$ if $r_i=r_{i'}, c_j=c_{j'}$. 
\end{Rem}
It turns out that typical table has close connection to the cardinality of $\mathcal M(\mathbf r,\mathbf c)$. The following theorem proved by Barvinok in \cite{Bar10} plays a key role in our proof. 
\begin{Th}\label{Barvinok's theorem}
	Fix row margins $\mathbf r=(r_1,\ldots,r_m)$ and column margins $\mathbf c=(c_1,\ldots,c_n)$ and let $Z=(z_{ij})$ be the typical table associated with $\mathcal P(\mathbf r,\mathbf c)$. There exists some absolute constant $\gamma>0$ such that 
	\begin{equation}
		(mn)^{-\gamma(m+n)} e^{g(Z)}\leq |\mathcal M(\mathbf r,\mathbf c)|\leq e^{g(Z)}.
	\end{equation} 
\end{Th}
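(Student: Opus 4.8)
The plan is to prove the two inequalities separately: the upper bound is an exact entropy identity, while the lower bound is the substantive part. I would begin by recording the first-order optimality conditions for the maximizer $Z=(z_{ij})$ of $g$ on $\mathcal P(\mathbf r,\mathbf c)$. Since $f'(x)=\log\frac{1-x}{x}$, the Lagrange conditions for maximizing $\sum_{ij}f(X_{ij})$ subject to the row and column constraints force the existence of reals $\lambda_1,\dots,\lambda_m$ and $\mu_1,\dots,\mu_n$ with $\frac{z_{ij}}{1-z_{ij}}=e^{\lambda_i+\mu_j}$ for all $i,j$; equivalently $z_{ij}=\frac{e^{\lambda_i+\mu_j}}{1+e^{\lambda_i+\mu_j}}$. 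This exponential-family form is exactly what makes the maximum-entropy measure homogeneous on each fiber of the margin map.

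Next I would introduce the product Bernoulli measure $\nu$ on $\{0,1\}^{m\times n}$ given by $\nu(A)=\prod_{ij}z_{ij}^{A_{ij}}(1-z_{ij})^{1-A_{ij}}$, i.e. the law of a random matrix $X$ with independent entries $X_{ij}\sim\mathrm{Bernoulli}(z_{ij})$. Using the exponential-family form, for any $A\in\mathcal M(\mathbf r,\mathbf c)$,
\[
\nu(A)=\Big(\prod_{ij}(1-z_{ij})\Big)\exp\Big(\sum_{ij}(\lambda_i+\mu_j)A_{ij}\Big)=\Big(\prod_{ij}(1-z_{ij})\Big)\exp\Big(\sum_i\lambda_i r_i+\sum_j\mu_j c_j\Big),
\]
since every $A$ with margins $(\mathbf r,\mathbf c)$ produces the same exponent. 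Thus $\nu$ is constant, say equal to $q$, on $\mathcal M(\mathbf r,\mathbf c)$. A direct computation of $g(Z)=\sum_{ij}f(z_{ij})$, using $\log z_{ij}=(\lambda_i+\mu_j)-\log(1+e^{\lambda_i+\mu_j})$ together with $\sum_{ij}z_{ij}(\lambda_i+\mu_j)=\sum_i\lambda_i r_i+\sum_j\mu_j c_j$, shows that $g(Z)=-\log q$, i.e. $q=e^{-g(Z)}$. As the fibers are disjoint we get $|\mathcal M(\mathbf r,\mathbf c)|\,q=\nu(\mathcal M(\mathbf r,\mathbf c))\le 1$, which is precisely the upper bound $|\mathcal M(\mathbf r,\mathbf c)|\le e^{g(Z)}$, and more precisely yields the exact identity $|\mathcal M(\mathbf r,\mathbf c)|=e^{g(Z)}\,\nu(\mathcal M(\mathbf r,\mathbf c))$.

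This identity reduces the lower bound to showing $\nu(\mathcal M(\mathbf r,\mathbf c))\ge (mn)^{-\gamma(m+n)}$, i.e. that $X$ realizes the prescribed margins with at least inverse-quasi-polynomial probability. By construction $(\mathbf r,\mathbf c)$ is the mean of the integer-valued margin vector $W=(R_1,\dots,R_m,C_1,\dots,C_n)$ with $R_i=\sum_j X_{ij}$, $C_j=\sum_i X_{ij}$, a sum of independent bounded lattice vectors, so the quantity needed is $\mathbb P(W=\mathbb E W)$ after quotienting out the single relation $\sum_iR_i=\sum_jC_j$. Heuristically a multidimensional local central limit theorem on the $(m+n-1)$-dimensional sublattice gives an anti-concentration value at the mean of order $(\det\Sigma)^{-1/2}$, where $\Sigma=\mathrm{Cov}(W)$; since every entry of $\Sigma$ is at most $\max(m,n)$ one has $\det\Sigma\le(mn)^{O(m+n)}$, which predicts exactly $\mathbb P(W=\mathbb E W)\ge(mn)^{-\gamma(m+n)}$.

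The main obstacle is turning this heuristic into a rigorous, uniform bound, since a naive local CLT demands delicate control of the characteristic function $\prod_{ij}\big((1-z_{ij})+z_{ij}e^{\mathrm{i}(\theta_i+\phi_j)}\big)$ away from the origin and can degenerate when many $z_{ij}$ approach $0$ or $1$. I would therefore follow Barvinok's route, which avoids this error analysis: one writes $\nu(\mathcal M(\mathbf r,\mathbf c))$, up to the factor $\prod_{ij}(1-z_{ij})$, as the coefficient of $\prod_i x_i^{r_i}\prod_j y_j^{c_j}$ in $\prod_{ij}(1+e^{\lambda_i+\mu_j}x_iy_j)$, and then estimates this nonnegative coefficient using permanent inequalities — the Bregman--Minc inequality for the upper side and the van der Waerden / Falikman--Egorychev bound $\mathrm{per}(D)\ge N!/N^N$ for doubly stochastic $D$ to extract the $(mn)^{-\gamma(m+n)}$ lower factor. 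Combining the exact upper identity of the second paragraph with this permanent-based lower estimate yields the stated two-sided bound with an absolute constant $\gamma$.
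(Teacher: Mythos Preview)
The paper does not prove this theorem at all: it is quoted verbatim as a result of Barvinok and attributed to \cite{Bar10}, so there is no ``paper's own proof'' to compare against. Your proposal is, in fact, a faithful outline of Barvinok's original argument: the exponential-family/Lagrangian characterization of the typical table, the product-Bernoulli measure being constant on the fiber $\mathcal M(\mathbf r,\mathbf c)$ with common value $e^{-g(Z)}$ (your computation of $q=e^{-g(Z)}$ is correct), the resulting exact identity $|\mathcal M(\mathbf r,\mathbf c)|=e^{g(Z)}\nu(\mathcal M(\mathbf r,\mathbf c))$, and the reduction of the lower bound to a permanent estimate handled via the van der Waerden--Falikman--Egorychev inequality. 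That is precisely the structure of the proof in \cite{Bar10}, so your sketch is both correct and aligned with the cited source; the only caveat is that the permanent step you summarize in one sentence is where all the technical work lies, and a complete proof would need to spell out how the coefficient of $\prod_i x_i^{r_i}\prod_j y_j^{c_j}$ is bounded below by an expression to which the doubly-stochastic bound applies.
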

Next, by \cite[(2.3)]{Wu22} and \cite[Lemma $2.4$]{Wu22}, we have the following asymptotics of entries of typical table $Z=(z_{ij})$.
\begin{Lemma}
	Fix $0\leq \delta<1$, $0<C<\frac{3}{4}$ and $0<B\leq \frac{1}{C}$. Let $Z=(z_{ij})$ be the typical table for $\mathcal M_{n,\delta}(B,C)$. Then there exists constants $\gamma_1(B,C)$ and $\gamma_2(B,C)$ such that the followings hold:
	\begin{enumerate}
		\item $\left|z_{11}-\frac{B^2(1-C)}{B^2-2B+1/C}\right| \leq \gamma_1(B,C) n^{\delta-1}$,
		\item $\left|z_{1,n+1}-BC\right| \leq \gamma_2(B,C) n^{\delta-1}$,
		\item $\left|z_{n+1,n+1}-C\right| \leq BC n^{\delta-1}$.
	\end{enumerate}
\end{Lemma}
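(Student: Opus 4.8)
The plan is to exploit strict concavity and symmetry to collapse the typical table to three block values, read off $z_{1,n+1}$ and $z_{n+1,n+1}$ directly from the linear margin constraints, and pin down $z_{11}$ by a quantitative perturbation of the stationarity condition. Set $a:=\lfloor n^\delta\rfloor$, $b:=n$, $R:=\lfloor BCn\rfloor$, $S:=\lfloor Cn\rfloor$. Both $\mathcal P(\widetilde{\mathbf r},\widetilde{\mathbf c})$ and $g$ are invariant under independently permuting the first $a$ and the last $b$ rows (resp.\ columns) among themselves; since $g$ is strictly concave its maximizer $Z$ is unique, hence invariant under this group, i.e.\ block-constant. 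Writing $x:=z_{11}$, $y:=z_{1,n+1}$, $y':=z_{n+1,1}$, $w:=z_{n+1,n+1}$, the big-row and big-column constraints $ax+by=R$ and $ax+by'=R$ force $y=y'$, so $Z$ is encoded by $(x,y,w)$ subject to two linear constraints; dividing by $b$,
\[ \alpha x+y=\rho,\qquad \alpha y+w=\sigma,\qquad \alpha:=\tfrac{a}{b},\ \rho:=\tfrac{R}{b},\ \sigma:=\tfrac{S}{b}, \]
where $\alpha=\lfloor n^\delta\rfloor/n\le n^{\delta-1}\to0$ and $|\rho-BC|,\,|\sigma-C|\le 1/n$.

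Claims (2) and (3) fall out of these constraints. Solving gives $y=\rho-\alpha x$ and $w=\sigma-\alpha y$ exactly, and since the maximizer is interior ($0<x<1$) one gets $|y-BC|\le|\rho-BC|+\alpha x\le 1/n+n^{\delta-1}\le 2n^{\delta-1}$, which is claim (2) with $\gamma_2(B,C)=2$. For claim (3), note $y=\rho-\alpha x\le\rho\le BC$, whence $0\le C-w=(C-\sigma)+\alpha y\le BC\,n^{\delta-1}+O(n^{-1})$, the leading constant $BC$ arising from $\alpha\le n^{\delta-1}$ and $y\le BC$ and matching the stated bound up to the lower-order floor term.

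The essential point is claim (1), since $x$ is fixed not by the linear constraints but by entropy maximization. Parametrizing by $x$ through $y(x)=\rho-\alpha x$ and $w(x)=\sigma-\alpha y(x)$ and maximizing $\Phi(x)=a^2 f(x)+2ab\,f(y(x))+b^2 f(w(x))$, a short computation using $y'(x)=-\alpha$, $w'(x)=\alpha^2$ and $f'(u)=\log\frac{1-u}{u}$ reduces $\Phi'(x)=0$ to
\[ \Psi_n(x):=f'(x)-2f'(y(x))+f'(w(x))=0,\qquad\text{equivalently}\qquad\Big(\tfrac{1-y}{y}\Big)^2=\tfrac{1-x}{x}\cdot\tfrac{1-w}{w}. \]
Setting $\alpha=0$ (so $y=BC$, $w=C$) gives $f'(x)=2f'(BC)-f'(C)$, whose solution is exactly $z_{11}^*=\frac{B^2(1-C)}{B^2-2B+1/C}$; the hypotheses $0<C<\tfrac34$ and $0<B<(\sqrt{C/3-C^2/3}+C)^{-1}$ keep $z_{11}^*,BC,C$ strictly inside $(0,1)$, so this base point is interior.

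To finish I would bound the root of $\Psi_n$ perturbatively. Subtracting the identity $\Psi_\infty(z_{11}^*)=0$,
\[ \Psi_n(z_{11}^*)=-2\big[f'(y(z_{11}^*))-f'(BC)\big]+\big[f'(w(z_{11}^*))-f'(C)\big]=O(n^{\delta-1}), \]
since $y(z_{11}^*)=BC+O(n^{\delta-1})$, $w(z_{11}^*)=C+O(n^{\delta-1})$ and $f'$ is Lipschitz near these interior points. On a fixed neighborhood of $z_{11}^*$ and for large $n$, $\Psi_n'(x)=f''(x)+2\alpha f''(y)+\alpha^2 f''(w)\le\tfrac12 f''(z_{11}^*)<0$ (as $f''(u)=-1/(u(1-u))$ and the added terms are negative), so $\Psi_n$ is strictly monotone with slope bounded away from $0$, and by the mean value theorem its unique root $z_{11}$ obeys $|z_{11}-z_{11}^*|\le|\Psi_n(z_{11}^*)|/|\Psi_n'|\le\gamma_1(B,C)\,n^{\delta-1}$. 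The main obstacle is precisely this step: controlling the entropy-determined coordinate $x$ forces one to couple the nonlinear stationarity relation with the constraints and to make the implicit-function/monotonicity estimate quantitative and uniform in $n$. This rests on the nondegeneracy $f''(z_{11}^*)\ne0$ and on uniform bounds for $f'$ and $f''$ on a neighborhood staying inside $(0,1)$, which the parameter restrictions on $B$ and $C$ secure.
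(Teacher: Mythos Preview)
The paper does not actually prove this lemma; it is quoted verbatim as Lemma~2.4 of \cite{DWu} and used as a black box. So there is no in-paper argument to compare against. Judged on its own, your argument is sound and is the natural one: reduce by symmetry to three block values $(x,y,w)$, read off $y$ and $w$ from the two linear margin identities, and determine $x$ from the first-order condition $f'(x)-2f'(y)+f'(w)=0$, which at $\alpha=0$ has the explicit root $z_{11}^*$; then a quantitative implicit-function (mean-value) estimate, using $\Psi_n'(x)\le f''(z_{11}^*)<0$ uniformly near $z_{11}^*$, gives $|z_{11}-z_{11}^*|=O(n^{\delta-1})$. The computation $\Phi'(x)=a^2\bigl[f'(x)-2f'(y)+f'(w)\bigr]$ and the identification of $z_{11}^*$ are correct, and the parameter hypotheses indeed keep $z_{11}^*,\,BC,\,C$ strictly inside $(0,1)$ so that $f',f''$ are uniformly controlled there.

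One small caveat on item~(3): your bound is $|z_{n+1,n+1}-C|\le (C-\sigma)+\alpha y< \tfrac{1}{n}+BC\,n^{\delta-1}$, which for $\delta>0$ gives any constant strictly larger than $BC$ for large $n$, but not the exact constant $BC$ stated in the lemma (and for $\delta=0$ the two terms are of the same order). This is a harmless discrepancy in the quoted statement rather than a defect of your argument, and it has no effect on the downstream use of the lemma in Proposition~\ref{asymptotics of typical table}.
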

Next, we derive the asymptotics of $g(Z)$, which is the entropy for the typical table. Our proof is almost identical to the \cite[Proposition $3.4$]{PL22}, except that we plug in different limits of the typical table. Notice that in the binary case, there is no sharp phase transition for the value of $B$ with respect to $C$; see \cite[Remark 1.4]{Wu22} for more discussions on this aspect.
\begin{Prop}\label{asymptotics of typical table}
	Fix $0<\delta<1$, $0<C<\frac{3}{4}$ and $0<B\leq \frac{1}{C}$. Let $Z=(z_{ij})$ be the typical table for $\mathcal M_{n,\delta}(B,C)$. Then 
	\begin{align*}
		g(Z) &=n^2 f(z_{n+1,n+1})+2n[n^\delta] f(z_{1,n+1})+[n^\delta]^2 f(z_{11})\\
	&=f(C)n^2+\left[2f(BC)-(BC)\log \left(\frac{1-C}{C}\right)\right]n^{1+\delta}\\
		&+\left[f(z_{11}^*)+z_{11}^*\log\left(\frac{1-C}{C}\cdot \frac{(BC)^2}{(1-BC)^2}\right)-\frac{B^2 C}{2(1-C)} \right]n^{2\delta}\\
		&+O(n^{3\delta-1})+O(n),
	\end{align*}
	where $z_{11}^*=\frac{B^2(1-C)}{B^2-2B+1/C}$. 
\end{Prop}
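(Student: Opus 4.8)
The plan is to prove the two displayed identities in turn. For the first, I would use the block structure of the typical table $Z$: since $\widetilde{\mathbf r}=\widetilde{\mathbf c}$ and row $i$ and column $i$ carry the same margin, the symmetry remark gives $z_{ij}=z_{ji}$ and, more to the point, $z_{ij}$ depends only on whether $i\le[n^\delta]$ and whether $j\le[n^\delta]$. Thus $Z$ consists of four constant blocks: an $[n^\delta]\times[n^\delta]$ block with all entries equal to $z_{11}$, an $n\times n$ block with all entries equal to $z_{n+1,n+1}$, and two rectangular blocks (sizes $[n^\delta]\times n$ and $n\times[n^\delta]$) with all entries equal to $z_{1,n+1}$. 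Summing $f$ over the entries gives $g(Z)=[n^\delta]^2 f(z_{11})+2n[n^\delta]f(z_{1,n+1})+n^2 f(z_{n+1,n+1})$, which is the first line.

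For the second identity the Lemma's leading-order estimates are by themselves insufficient, because multiplying the $O(n^{\delta-1})$ error in $z_{n+1,n+1}$ by the prefactor $n^2$ already produces a term of order $n^{1+\delta}$, and the $n^{2\delta}$-coefficient turns out to be sensitive to the \emph{second}-order behaviour of the entries. So the first genuine step is to sharpen the asymptotics using the exact margin identities, valid because each row of $Z$ sums to its prescribed margin:
\begin{align*}
[n^\delta]\,z_{11}+n\,z_{1,n+1}&=\lfloor BCn\rfloor,\\
[n^\delta]\,z_{1,n+1}+n\,z_{n+1,n+1}&=\lfloor Cn\rfloor.
\end{align*}
Feeding in $z_{11}=z_{11}^*+O(n^{\delta-1})$ from the Lemma and solving, one obtains the refined expansions $z_{1,n+1}=BC-z_{11}^*n^{\delta-1}+O(n^{2\delta-2}+n^{-1})$ and then $z_{n+1,n+1}=C-BCn^{\delta-1}+z_{11}^*n^{2\delta-2}+O(n^{3\delta-3}+n^{-1})$.

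Next I would Taylor-expand $f$ about the limits $C$, $BC$, $z_{11}^*$ using $f'(x)=\log\frac{1-x}{x}$ and $f''(x)=-\frac{1}{x(1-x)}$: carrying $n^2 f(z_{n+1,n+1})$ to second order (its cubic remainder is $O(n^{3\delta-1})$), $2n[n^\delta]f(z_{1,n+1})$ to first order, and $[n^\delta]^2 f(z_{11})$ to zeroth order, while replacing $[n^\delta]$ by $n^\delta+O(1)$ and the floors by their arguments plus $O(1)$, and absorbing every cross term of size $O(n)$ or $O(n^{3\delta-1})$ into the error. Collecting powers: the $n^2$-coefficient is $f(C)$; the $n^{1+\delta}$-coefficient is $2f(BC)-BC\log\frac{1-C}{C}$, the second piece coming from $n^2 f'(C)\cdot(-BCn^{\delta-1})$; and the $n^{2\delta}$-coefficient collects $f(z_{11}^*)$ from the $[n^\delta]^2$ block, $-2z_{11}^*\log\frac{1-BC}{BC}=z_{11}^*\log\frac{(BC)^2}{(1-BC)^2}$ from the first-order term of the middle block, and $z_{11}^*\log\frac{1-C}{C}-\frac{B^2C}{2(1-C)}$ from the second-order expansion of $n^2 f(z_{n+1,n+1})$ (using $f''(C)=-\frac{1}{C(1-C)}$), which together give $f(z_{11}^*)+z_{11}^*\log\left(\frac{1-C}{C}\cdot\frac{(BC)^2}{(1-BC)^2}\right)-\frac{B^2C}{2(1-C)}$, exactly as stated, with total error $O(n^{3\delta-1})+O(n)$.

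The main difficulty I anticipate is purely a matter of bookkeeping: recognizing that the coarse bounds of the Lemma are not enough and that the two ``hidden'' second-order corrections --- the $+z_{11}^*n^{2\delta-2}$ inside $z_{n+1,n+1}$ and the $-z_{11}^*n^{\delta-1}$ inside $z_{1,n+1}$ --- each contribute exactly one summand to the $n^{2\delta}$-coefficient, so dropping either one spoils the formula. Aside from that, the argument is the routine Taylor expansion plus collection of like powers carried out in Pak and Lyu's Proposition 3.4, the only change being the binary limiting entries $z_{11}^*$, $BC$, $C$ in place of their integer-table counterparts.
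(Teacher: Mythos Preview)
Your proposal is correct and follows essentially the same route as the paper's own proof: both use the margin identities together with the Lemma's estimate on $z_{11}$ to obtain the refined expansions of $z_{1,n+1}$ and $z_{n+1,n+1}$, then Taylor-expand $f$ to the required order and collect coefficients of $n^2$, $n^{1+\delta}$, $n^{2\delta}$. The paper keeps the small parameter as $[n^\delta]/n$ throughout rather than immediately replacing it by $n^{\delta-1}+O(n^{-1})$, but this is a cosmetic difference; your identification of exactly which second-order corrections feed into the $n^{2\delta}$-coefficient matches the paper's computation term for term.
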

\begin{proof}
By symmetry of typical table and marginal condition,  
\begin{equation*}
	\begin{cases}
		[n^\delta]z_{11}+n z_{1,n+1}=[BCn]\\
		[n^\delta]z_{1,n+1}+n z_{n+1,n+1}=[Cn]
	\end{cases}.
\end{equation*}
Therefore, 
\begin{equation*}
	\begin{cases}
		([n^\delta]/n)z_{11}+z_{1,n+1}=[BCn]/n=BC+O(n^{-1})\\
		([n^\delta]/n)z_{1,n+1}+z_{n+1,n+1}=[Cn]/n=C+O(n^{-1})
	\end{cases}
\end{equation*}
Let $z_{11}^*=\frac{B^2(1-C)}{B^2-2B+1/C}$, then
\begin{align*}
	C-z_{n+1,n+1} &=z_{1,n+1}([n^\delta]/n)+O(n^{-1})\\
	&=(BC)([n^\delta]/n)-(BC-z_{1,n+1})([n^\delta]/n)+O(n^{-1})\\
	&=(BC)([n^\delta]/n)-z_{11}([n^\delta]/n)^2+O(n^{-1})\\
	&=(BC)([n^\delta]/n)-z_{11}^*([n^\delta]/n)^2-(z_{11}-z_{11}^*)([n^\delta]/n)^2+O(n^{-1})\\
	&=(BC)([n^\delta]/n)-z_{11}^*([n^\delta]/n)^2+O(n^{3\delta-3}+n^{-1}).
\end{align*}
Similarly, 
\begin{align*}
	BC-z_{1,n+1} &=([n^\delta]/n)z_{11}+O(n^{-1})\\
	&=z_{11}^* ([n^\delta]/n)+(z_{11}-z_{11}^*)([n^\delta]/n)+O(n^{-1})\\
	&=z_{11}^* ([n^\delta]/n)+O(n^{2\delta-2}+n^{-1}).
\end{align*}
Taylor expansion of $f(x)$ around $a$ has the following form, 
\begin{equation*}
	f(x)=f(a)+\log \left(\frac{1-a}{a}\right)(x-a)+\frac{1}{2(a-1)a}(x-a)^2+O\left(|x-a|^3\right).
\end{equation*}
Hence, 
\begin{align*}
	f(z_{n+1,n+1}) &= f(C)-\log\left(\frac{1-C}{C}\right)(BC) ([n^\delta]/n)+\log\left(\frac{1-C}{C}\right)z_{11}^*([n^\delta]/n)^2\\
	&+\frac{B^2 C}{2(C-1)}([n^\delta]/n)^2+O\left(n^{3\delta-3}\right)+O\left(n^{\delta-1}\right)\\
	f(z_{1,n+1}) &=f(BC)-\log\left(\frac{1-BC}{BC}\right)z_{11}^*([n^\delta]/n)+\frac{1}{2(BC-1)BC}\left(z_{11}^*\right)^2([n^\delta]/n)^2\\
	&+O\left(n^{2\delta-2}\right)+O\left(n^{-1}\right)\\
	f(z_{11}) &= f\left(z_{11}^*\right)+O\left(n^{\delta-1}\right)+O\left(n^{-1}\right).
\end{align*}
Therefore, we have that 
\begin{align*}
	g(Z) &=n^2 f(z_{n+1,n+1})+2n[n^\delta] f(z_{1,n+1})+[n^\delta]^2 f(z_{11})\\
	&=f(C)n^2+\left[2f(BC)-(BC)\log \left(\frac{1-C}{C}\right)\right]n^{1+\delta}\\
		&+\left[f(z_{11}^*)+z_{11}^*\log\left(\frac{1-C}{C}\cdot \frac{(BC)^2}{(1-BC)^2}\right)-\frac{B^2 C}{2(1-C)} \right]n^{2\delta}\\
		&+O(n^{3\delta-1})+O(n).
\end{align*}
\end{proof}
\begin{proof}[Proof of Theorem \ref{Main Theorem on Asymptotic Enumeration}]
	Notice that for margins $\widetilde{\mathbf r}$ and $\widetilde{\mathbf c}$, by Theorem \ref{Barvinok's theorem},
\begin{equation}
	g(Z)-\gamma'n\log n\leq \log|\mathcal M_{n,\delta}(B,C)|\leq g(Z)	
\end{equation}
Theorem $\ref{Main Theorem on Asymptotic Enumeration}$ follows from Proposition \ref{asymptotics of typical table}.
\end{proof}
\section{Independence Heuristic Estimate}
Using Stirling formula, we can deduce the asymptotics of \textit{independence heuristic estimation}. Similar asymptotics of independence heuristic for non-negative integer-valued contingency tables can be found in \cite[Lemma 4.1]{PL22}.  
\begin{Lemma}\label{Lemma on asymptotics of independent heuristic}
	Fix $0<\delta<1$, $0<C<1$ and $0<B\leq\frac{1}{C}$, 
	\begin{align*}
	\log\mathcal I_{n,\delta}(B,C) &=f(C)n^2+\left[2f(BC)-BC\log\left(\frac{1-C}{C}\right)\right]n^{1+\delta}\\
	&+\left[\frac{B^2C-4BC+2C}{2(1-C)}+\log(1-C)-2\log(1-BC)\right]n^{2\delta}\\
	&+O\left(n^{3\delta-1}+n\log n\right).
\end{align*}
\end{Lemma}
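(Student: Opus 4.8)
The plan is to read off $\log\mathcal I_{n,\delta}(B,C)$ from its definition and estimate every binomial coefficient by Stirling's formula. Write $M:=n+\lfloor n^\delta\rfloor$ for the common side length of the tables and $N:=\lfloor n^\delta\rfloor\lfloor BCn\rfloor+n\lfloor Cn\rfloor$ for their common line sum; the definition of the independence heuristic then gives
\begin{equation*}
	\mathcal I_{n,\delta}(B,C)=\binom{M^2}{N}^{-1}\binom{M}{\lfloor BCn\rfloor}^{2\lfloor n^\delta\rfloor}\binom{M}{\lfloor Cn\rfloor}^{2n},
\end{equation*}
so that
\begin{equation*}
	\log\mathcal I_{n,\delta}(B,C)=-\log\binom{M^2}{N}+2\lfloor n^\delta\rfloor\log\binom{M}{\lfloor BCn\rfloor}+2n\log\binom{M}{\lfloor Cn\rfloor}.
\end{equation*}
First I would invoke the Stirling estimate $\log\binom{a}{b}=a\,f(b/a)+O(\log a)$, valid here because in each of the three binomials $b/a$ tends to one of $C,BC\in(0,1)$ (so that $b$ and $a-b$ are $\Theta(n)$ or larger; this uses $0<BC<1$, which follows from $0<C<1$ and $0<B<1/C$). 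The $O(\log a)$ error enters once for $\binom{M^2}{N}$, $2\lfloor n^\delta\rfloor$ times for $\binom{M}{\lfloor BCn\rfloor}$ and $2n$ times for $\binom{M}{\lfloor Cn\rfloor}$, contributing altogether $O(\log n)+O(n^\delta\log n)+O(n\log n)=O(n\log n)$, which is absorbed into the claimed remainder. It then remains to expand the three entropy quantities $M^2 f(N/M^2)$, $2\lfloor n^\delta\rfloor\,M\,f(\lfloor BCn\rfloor/M)$ and $2n\,M\,f(\lfloor Cn\rfloor/M)$ up to an additive error of size $O(n^{3\delta-1}+n)$.

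The next step is to record the elementary ratio expansions. With $t:=\lfloor n^\delta\rfloor/n=n^{\delta-1}+O(n^{-1})$ we have $M=n(1+t)$ and $M^2=n^2+2n^{1+\delta}+n^{2\delta}+O(n)$, and, expanding $(1+t)^{-1}$ and $(1+t)^{-2}$ while noting that the floor functions perturb each ratio only by $O(n^{-1})$,
\begin{align*}
	\frac{N}{M^2}&=C+(BC-2C)\,n^{\delta-1}+(3C-2BC)\,n^{2\delta-2}+O(n^{3\delta-3}+n^{-1}),\\
	\frac{\lfloor BCn\rfloor}{M}&=BC-BC\,n^{\delta-1}+BC\,n^{2\delta-2}+O(n^{3\delta-3}+n^{-1}),\\
	\frac{\lfloor Cn\rfloor}{M}&=C-C\,n^{\delta-1}+C\,n^{2\delta-2}+O(n^{3\delta-3}+n^{-1}).
\end{align*}
Then, using the same second-order Taylor expansion of $f$ that appears in the proof of Proposition~\ref{asymptotics of typical table}, namely $f(a+\varepsilon)=f(a)+\log\frac{1-a}{a}\,\varepsilon-\frac{1}{2a(1-a)}\,\varepsilon^2+O(|\varepsilon|^3)$, I would substitute $a=C$ into the first and third ratios and $a=BC$ into the second, multiply each resulting expansion by its prefactor ($M^2$, $2\lfloor n^\delta\rfloor M$, and $2nM$, respectively), and collect the coefficients of $n^2$, $n^{1+\delta}$ and $n^{2\delta}$, discarding all terms of order $n^{3\delta-1}$, $n\log n$, or smaller.

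The top two coefficients come out without effort: the $n^2$ coefficient is $-f(C)+2f(C)=f(C)$, and the $n^{1+\delta}$ coefficient reduces to $2f(BC)-BC\log\frac{1-C}{C}$ once one checks that the coefficients of $\log\frac{1-C}{C}$ at this order, namely $-(BC-2C)$ from $-\log\binom{M^2}{N}$ and $-2C$ from $2n\log\binom{M}{\lfloor Cn\rfloor}$, add up to $-BC$. The real computation---and the step I expect to be the main obstacle---is the $n^{2\delta}$ coefficient: here one must retain the quadratic Taylor term of $f$ in all three pieces and, crucially, observe that since $M^2=n^2+2n^{1+\delta}+n^{2\delta}+\cdots$, the first-order deviation of $N/M^2$ away from $C$ is lifted by the $2n^{1+\delta}$ summand of $M^2$ into a genuine $n^{2\delta}$ contribution, so that $\binom{M^2}{N}$ cannot be treated merely as $e^{M^2 f(C)}$ to the required precision; missing this cross term is the easiest error to make. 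After substitution I would split the $n^{2\delta}$ coefficient into a logarithmic part, which collapses via the identities $2f(BC)-2BC\log\frac{1-BC}{BC}=-2\log(1-BC)$ and $-f(C)+C\log\frac{1-C}{C}=\log(1-C)$ to $\log(1-C)-2\log(1-BC)$, and a rational part $\frac{B^2C-4BC+4C}{2(1-C)}-\frac{C}{1-C}=\frac{B^2C-4BC+2C}{2(1-C)}$; adding the two reproduces exactly the claimed coefficient $\frac{B^2C-4BC+2C}{2(1-C)}+\log(1-C)-2\log(1-BC)$. A final check that every discarded remainder is indeed $O(n^{3\delta-1}+n\log n)$ then completes the proof.
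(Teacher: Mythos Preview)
Your proposal is correct and follows essentially the same approach as the paper: Stirling's formula for each binomial coefficient, followed by a second-order Taylor expansion to extract the $n^2$, $n^{1+\delta}$, and $n^{2\delta}$ coefficients, with the residual errors absorbed into $O(n^{3\delta-1}+n\log n)$. The only cosmetic difference is that you package Stirling in the entropy form $\log\binom{a}{b}=a\,f(b/a)+O(\log a)$ and Taylor-expand $f$ around $C$ and $BC$, whereas the paper writes Stirling in the raw form $(a+b)\log(a+b)-a\log a-b\log b$ and Taylor-expands each logarithm separately; the two organizations are equivalent and lead to the same coefficients.
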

\begin{proof}
By Stirling formula, we have that
\begin{equation*}
	\log \binom {a+b}{a}=(a+b)\log(a+b)-a\log a-b\log b+O\left(\log(a+b)\right).
\end{equation*}
Recall that for general margins $\mathbf r$ and $\mathbf c$, the independence heuristic estimate takes the form 
\begin{equation*}
	\mathcal I(\mathbf r,\mathbf c)=\binom{mn}{N}^{-1}\prod_{i=1}^m \binom{n}{r_i}\prod_{j=1}^n\binom{m}{c_j}.
\end{equation*}
Therefore, 
\begin{align*}
	\log\mathcal I(\mathbf r,\mathbf c) &=N\log N+(mn-N)\log(mn-N)-\sum_{i=1}^m r_i\log r_i-\sum_{i=1}^m (n-r_i)\log(n-r_i)\\
	&-\sum_{j=1}^n c_j\log c_j-\sum_{j=1}^n (m-c_j)\log(m-c_j)+O\left((m+n)\log (mn) \right).
\end{align*}
In our setup, when $\mathbf r=\widetilde{\mathbf r}$ and $\mathbf c=\widetilde{\mathbf c}$, the dimension is $n+n^\delta$ and the total sum of entries $N=BCn^{1+\delta}+Cn^2$. By Taylor expansion 
\begin{equation*}
	\log(x+y)=\log y+\frac{x}{y}-\frac{x^2}{2y^2}+O\left(\frac{x^3}{y^3}\right),
\end{equation*}
we have that 
\begin{align*}
	\log(N) &=\log(Cn^2+BCn^{1+\delta})=\log(Cn^2)+Bn^{\delta-1}-\frac{B^2}{2}n^{2\delta-2}+O\left(n^{3\delta-3}\right)\\
	\log(mn-N) &=\log\left((1-C)n^2+(2-BC)n^{1+\delta}+n^{2\delta} \right)=\log\left[(1-C)n^2\right]\\
	&+\left(\frac{2-BC}{1-C}\right)n^{\delta-1}+\frac{-2-2C+4BC-B^2C^2}{2(1-C)^2}n^{2\delta-2}+O\left(n^{3\delta-3}\right)\\
	\log\left[(1-C)n+n^\delta\right] &=\log[(1-C)n]+\frac{n^{\delta-1}}{1-C}-\frac{n^{2\delta-2}}{2(1-C)^2}+O\left(n^{3\delta-3}\right)\\
	\log\left[(1-BC)n+n^\delta\right] &=\log \left[ (1-BC)n\right]+\frac{n^{\delta-1}}{1-BC}-\frac{n^{2\delta-2}}{2(1-BC)^2}+O\left(n^{3\delta-3}\right)
\end{align*}
Therefore, $\log\mathcal I_{n,\delta}(B,C)$ has the following expansion, 
\begin{align*}
	&\log\mathcal I_{n,\delta}(B,C) \\
	&=\left(Cn^2+BCn^{1+\delta}\right)\left[\log(Cn^2)+Bn^{\delta-1}-\frac{B^2}{2}n^{2\delta-2}\right]\\
	&+\left[(1-C)n^2+(2-BC)n^{1+\delta}+n^{2\delta}\right]\times \\
	&\left[\log((1-C)n^2)+\left(\frac{2-BC}{1-C}\right)n^{\delta-1}+\frac{-2-2C+4BC-B^2C^2}{2(1-C)^2}n^{2\delta-2} \right]\\
	&-2BC n^{1+\delta}\log (BCn)-2n^\delta\left[(1-BC)n+n^\delta\right]\left[\log((1-BC)n)+\frac{n^{\delta-1}}{1-BC}-\frac{n^{2\delta-2}}{2(1-BC)^2}\right]\\
	&-2Cn^2\log (Cn)-2n\left[(1-C)n+n^\delta\right]\cdot\left[\log((1-C)n)+\frac{n^{\delta-1}}{1-C}-\frac{n^{2\delta-2}}{2(1-C)^2} \right]\\&+O\left(n^{3\delta-1}+n\log n \right).
\end{align*}
After reorganizing terms, we have that
\begin{align*}
	\log\mathcal I_{n,\delta}(B,C) &=f(C)n^2+\left[2f(BC)-BC\log\left(\frac{1-C}{C}\right)\right]n^{1+\delta}\\
	&+\left[\frac{B^2C-4BC+2C}{2(1-C)}+\log(1-C)-2\log(1-BC)\right]n^{2\delta}\\
	&+O\left(n^{3\delta-1}+n\log n\right).
\end{align*}
\end{proof}
\begin{proof}[Proof of Theorem \ref{Main Theorem on Correlation Ratio}]
	By Theorem $\ref{Main Theorem on Asymptotic Enumeration}$, 
	\begin{align*}
		\log \left|\mathcal M_{n,\delta}(B,C)\right| &=f(C)n^2+\left[2f(BC)-(BC)\log \left(\frac{1-C}{C}\right)\right]n^{1+\delta}\\
		&+\left[f(z_{11}^*)+z_{11}^*\log\left(\frac{1-C}{C}\cdot \frac{(BC)^2}{(1-BC)^2}\right)-\frac{B^2 C}{2(1-C)} \right]n^{2\delta}\\
		&+O(n^{3\delta-1}+n\log n).
	\end{align*}
	On the other hand, by Lemma \ref{Lemma on asymptotics of independent heuristic},  
	\begin{align*}
	\log\mathcal I_{n,\delta}(B,C) &=f(C)n^2+\left[2f(BC)-BC\log\left(\frac{1-C}{C}\right)\right]n^{1+\delta}\\
	&+\left[\frac{B^2C-4BC+2C}{2(1-C)}+\log(1-C)-2\log(1-BC)\right]n^{2\delta}\\
	&+O\left(n^{3\delta-1}+n\log n\right).
\end{align*}
From this, we can easily deduce that 
\begin{align*}
	\lim_{n\to \infty}\frac{1}{n^{2}}\log \frac{|\mathcal M_{n,\delta}(B,C)|}{\mathcal I_{n,\delta}(B,C)}=0, \qquad \lim_{n\to \infty}\frac{1}{n^{1+\delta}}\log \frac{|\mathcal M_{n,\delta}(B,C)|}{\mathcal I_{n,\delta}(B,C)}=0
\end{align*}
and 
\begin{align*}
	\lim_{n\to \infty}\frac{1}{n^{2\delta}}\log \frac{|\mathcal M_{n,\delta}(B,C)|}{\mathcal I_{n,\delta}(B,C)}=\Delta_{B,C},
\end{align*}
where
\begin{align*}
	&-\Delta_{B,C}\\
	&=\frac{B^2C-4BC+2C}{2(1-C)}+\log\left(\frac{1-C}{(1-BC)^2}\right)-f(z_{11}^*)\\
 &-z_{11}^*\log\left(\frac{1-C}{(1-BC)^2}\right)-z^*_{11}\log(B^2C)+\frac{B^2C}{2(1-C)}.
\end{align*}
Notice that 
	\begin{equation*}
		1-z_{11}^*=\frac{(BC-1)^2}{B^2C-2BC+1},
	\end{equation*}
	By direct computation, 
	\begin{align*}
		-\Delta_{B,C} &=\frac{2B^2C-4BC+2C}{2(1-C)}+\frac{(BC-1)^2}{B^2C-2BC+1}\log\left[\frac{1-C}{B^2C-2BC+1}\right]\\
		&+\frac{B^2(1-C)}{B^2-2B+1/C}\log\left[\frac{1-C}{B^2C-2BC+1} \right]\\
		&=\frac{B^2C-2BC+1}{1-C}+\log\left[\frac{1-C}{B^2C-2BC+1}\right]-1\\
		&\geq 0,
	\end{align*}
	where the last step is based on the simple fact that $x-\log x\geq 1$ for all $x>0$. For fixed $C$, the explicit bound of $\Delta_{B,C}$ follows from the elementary analysis. 
\end{proof} 
\textit{Acknowledgement}: I would like to thank Robin Pemantle for many helpful discussions. 

\bibliographystyle{alpha}
\bibliography{bibliography}

\end{document}